\documentclass[leqno,11pt]{article}
\usepackage{amssymb,amsmath}
\usepackage{amsthm}
\usepackage{amsmath}
\newtheorem{thm}{Theorem}[section]
\newtheorem{cor}[thm]{Corollary}

\theoremstyle{definition}

\newtheorem{exa}[thm]{Example}

\numberwithin{equation}{section} \textheight  22 true cm \textwidth  15 true cm \setlength{\oddsidemargin}{0mm} \setlength{\evensidemargin}{0mm}
\vfuzz2pt \hfuzz2pt
\def\a{\alpha}
\def\b{\beta}

\def\w{\widetilde}

\def\R{{\bf R}}

\begin{document}
\title{\bf The $\mathcal{L}$-sectional curvature of $S$-manifolds}
\author{Mehmet Akif AKYOL \\
Department of Mathematics, Faculty of Arts and Sciences \\
Bingol University, 12000 Bing\"{o}l, Turkey\\
makyol@bingol.edu.tr \and Luis M. FERN\'{A}NDEZ\footnote{The last two authors are partially supported by the PAI group FQM-327 (Junta de Andaluc\'{\i}a,
Spain, 2011) and by the
MEC project MTM 2011-22621 (MEC, Spain, 2011).} \\
Departmento de Geometr\'{\i}a y Topolog\'{\i}a\\
Facultad de Matem\'{a}ticas, Universidad de Sevilla\\
Apartado de Correos 1160, 41080 Sevilla, Spain\\
lmfer@us.es \and Alicia PRIETO-MART\'{I}N \\
Departmento de Geometr\'{\i}a y Topolog\'{\i}a\\
Facultad de Matem\'{a}ticas, Universidad de Sevilla\\
Apartado de Correos 1160, 41080 Sevilla, Spain\\
aliciaprieto@us.es}
\date{}
\maketitle

\begin{abstract}
We investigate $\mathcal{L}$-sectional curvature of $S$-manifolds with respect to the Riemannian connection and to certain semi-symmetric metric and
non-metric connections naturally related with the structure, obtaining conditions for them to be constant and giving examples of $S$-manifolds in
such conditions. Moreover, we calculate the scalar curvature in all the cases.
\end{abstract}

\noindent{\bf 2012 AMS Subject Classification:} 53C05, 53C15.

\noindent{\bf Keywords and sentences:} $S$-manifold, Semi-Symmetric Metric Connection, Semi-Symmetric non-Metric Connection, $\mathcal{L}$-Sectional
Curvature, Scalar Curvature.

\section{Introduction.}

In 1963, Yano \cite{Y} introduced the notion of $f$-structure on a $C^{\infty }$ $(2n+s)$-dimensional manifold $M$, as a non-vanishing tensor field
$f$ of type $(1,1)$ on $M$ which satisfies $f^3+f =0$ and has constant rank $r=2n$. Almost complex ($s=0$) and almost contact ($s=1$) are well-known
examples of $f$-structures. The case $s=2$ appeared in the study of hypersurfaces in almost contact manifolds \cite{BL,GY2} and it motivated that, in
1970, Goldberg and Yano \cite{GY} defined globally framed $f$-structures (also called $f$.pk-structures), for which the subbundle $\ker f$ is
parallelizable. Then, there exists a global frame $\{\xi_1,\dots,\xi_s\}$ for the subbundle $\ker f$ (the vector fields $\xi_1,\dots,\xi_s$ are
called the structure vector fields), with dual 1-forms $\eta^1,\dots,\eta^s$.

Thus, we can consider a Riemannian metric $g$ on $M$, associated with a globally framed $f$-structure, such that
$g(fX,fY)=g(X,Y)-\sum_{\a=1}^s\eta^\a(X)\eta^\a(Y)$, for any vector fields $X,Y$ in $M$ and then, the structure is called a metric $f$-structure.
Therefore, $TM$ splits into two complementary subbundles Im$f$ (whose differentiable distribution is usually denoted by $\mathcal{L}$) and $\ker f$
and, moreover, the restriction of $f$ to Im$f$ determines a complex structure.

A wider class of globally framed $f$-manifolds (that is, manifolds endowed with a globally framed $f$-structure) was introduced in \cite{B} by Blair
according to the following definition: a metric $f$-structure is said to be a $K$-structure if the fundamental 2-form $\Phi$, given by
$\Phi(X,Y)=g(X,fY)$, for any vector fields $X$ and $Y$ on $M$, is closed and the normality condition holds, that is,
$[f,f]+2\sum_{\a=1}^sd\eta^{\a}\otimes\xi_{\a}=0$, where $[f,f]$ denotes the Nijenhuis torsion of $f$. A $K$-manifold is called an $S$-manifold if
$d\eta^{\a}=\Phi$, for all $\a=1,\dots,s$. If $s=1$, an $S$-manifold is a Sasakian manifold. Furthermore, $S$-manifolds have been studied by several
authors (see, for example, \cite{B1,CFF,HOA,KT}).

It is well known that there are not exist $S$-manifolds ($s\geq 2$) of constant sectional curvature and, for Sasakian manifolds, the unit sphere is
the only one. This is due to the fact that $K(X,\xi_\a)=1$ and $K(\xi_\a,\xi_\b)=0$, for any unit vector field $X\in\mathcal{L}$ and any
$\a,\b=1\dots,s$. For this reason, it is interesting to study the sectional curvature of planar sections spanned by vector fields of $\mathcal{L}$
(called $\mathcal{L}$-sectional curvature) and to obtain conditions for this sectional curvature to be constant.

Further, in 1924 Friedmann and Schouten \cite{FS} introduced semi-symmetric linear connections on a differentiable manifold. Later, Hayden \cite{H}
defined the notion of metric connection with torsion on a Riemannian manifold. More precisely, if $\nabla$ is a linear connection in a differentiable
manifold $M$, the torsion tensor $T$ of $\nabla $ is given by $T(X,Y)=\nabla _{X}Y-\nabla _{Y}X-[X,Y]$, for any vector fields $X$ and $Y$ on $M$. The
connection $\nabla $ is said to be symmetric if the torsion tensor $T$ vanishes, otherwise it is said to be non-symmetric. In this case, $\nabla $ is
said to be a semi-symmetric connection if $T(X,Y)=\eta(Y)X-\eta (X)Y$, for any $X,Y$, where $\eta$ is a 1-form on $M$. Moreover, if $g$ is a
Riemannian metric on $M$, $\nabla $ is called a metric connection if $\nabla g=0$, otherwise it is called non-metric. It is well known that the
Riemannian connection is the unique metric and symmetric linear connection on a Riemannian manifold. Recently, $S$-manifolds endowed with a
semi-symmetric either metric or non-metric connection naturally related with the $S$-structure have been studied in \cite{ATF2,ATF}.

In this paper, we investigate $\mathcal{L}$-sectional curvature of $S$-manifolds with respect to the Riemannian connection and to the semi-symmetric
metric and non-metric connections introduced in \cite{ATF2,ATF}, obtaining conditions for them to be constant and giving examples of $S$-manifolds in
such conditions. Moreover, we calculate the scalar curvature in all the cases.

\section{Preliminaries on $S$-manifolds.}\label{sec1}

A $(2n+s)-$ dimensional differentiable manifold $M$ is called a \textit{metric $f$-manifold} if there exist a $(1,1)$ type tensor field $f $, $s$
vector fields $\xi_1,\dots,\xi_s$, called {\it structure vector fields}, $s$ 1-forms $\eta^1,\dots,\eta^s$ and a Riemannian metric $g$ on $M$ such
that
\begin{equation}  \label{def1}
f^{2}=-I+\overset{s}{\underset{\a=1}{\sum }}\eta^{\a}\otimes \xi _{\a},\mbox{ }\eta ^{\a}(\xi _{\b})=\delta _{\a\b},\mbox{ }f\xi_{\a}=0,\mbox{
}\eta^{\a}\circ f=0,
\end{equation}
\begin{equation}  \label{def2}
g(fX,fY)=g(X,Y)-\overset{s}{\underset{\a=1}{\sum }}\eta ^{\a}(X)\eta ^{\a}(Y),
\end{equation}
for any $X,Y\in\mathcal{X}(M),$ $\a,\b\in\{1,\dots,s\}$. In addition:
\begin{equation}  \label{def3}
\eta ^{\a}(X)=g(X,\xi _{\a}),\mbox{ }g(X,fY)=-g(fX,Y).
\end{equation}

Then, a 2-form $\Phi $ is defined by $\Phi(X,Y)=g(X,fY)$, for any $X,Y\in\mathcal{X}(M)$, called the \textit{fundamental 2-form}. In what follows, we
denote by $\mathcal{M}$ the distribution spanned by the structure vector fields $\xi_1,\dots,\xi_s$ and by $\mathcal{L}$ its orthogonal complementary
distribution. Then, $\mathcal{X}(M)=\mathcal{L}\oplus\mathcal{M}$. If $X\in\mathcal{M}$, then $fX=0$ and if $X\in\mathcal{L}$, then $\eta^\a(X)=0$,
for any $\a\in\{1,\dots,s\}$, that is, $f^2X=-X$.

In a metric $f$-manifold, special local orthonormal basis of vector fields can be considered: let $U$ be a coordinate neighborhood and $E_1$ a unit
vector field on $U$ orthogonal to the structure vector fields. Then, from (\ref{def1})-(\ref{def3}), $fE_1$ is also a unit vector field on $U$
orthogonal to $E_1$ and the structure vector fields. Next, if it is possible, let $E_2$ be a unit vector field on $U$ orthogonal to $E_1$, $fE_1$ and
the structure vector fields and so on. The local orthonormal basis $\{E_1,\dots,E_n,fE_1,\dots,fE_n,\xi_{1},\dots,\xi_{s}\},$, so obtained is called an
\textit{$f$-basis}.

Moreover, a metric $f$-manifold is \textit{normal} if
\begin{equation*}
[f,f]+2\underset{\a=1}{\overset{s}{\sum}}d\eta^{\a}\otimes\xi _{\a}=0,
\end{equation*}
where $[f,f]$ denotes the Nijenhuis tensor field associated to $f$. A metric $f$-manifold is said to be an \textit{$S$-manifold} if it is normal and
\begin{equation*}
\eta ^{1}\wedge\cdots\wedge\eta^s\wedge(d\eta ^{\a})^{n}\neq 0\mbox{ and }\Phi=d\eta ^{\a},\mbox{ }1\leq \a\leq s.
\end{equation*}

Observe that, if $s=1$, an $S$-manifold is a Sasakian manifold. For $s\geq 2$, examples of $S$-manifolds can be found in \cite{B,B1,HOA}.

If $\nabla$ is a linear connection on an $S$-manifold and $K$ denotes the sectional curvature associated with $\nabla$, the {\it
$\mathcal{L}$-sectional curvature} $K_\mathcal{L}$ of $\nabla$ is defined as $K_\mathcal{L}(X,Y)=K(X,Y)$, for any $X,Y\in\mathcal{L}$. The {\it
scalar curvature} of the $S$-manifold with respect to $\nabla$ is given by
\begin{equation}\label{scalar}
\tau=\frac{1}{2}\sum_{i,j=1}^{2n+s}K(e_i,e_j),
\end{equation}
for any local orthonormal frame $\{e_1,\dots,e_{2n+s}\}$ of tangent vector fields to $M$.

\section{The $\mathcal{L}$-sectional curvature of $S$-manifolds.}

From now on, let $M$ denote an $S$-manifold $(M,f,\xi_1,\dots,\xi_s,\eta^1,\dots,\eta^s,g)$ of dimension $2n+s$. We are going to study the sectional
curvature of $M$ with respect to different types of connections on $M$.

\subsection{The case of the Riemannian connection.}

First, let $\nabla$ denote the Riemannian connection of $g$. For the sectional curvature $K$ of $\nabla$, in \cite{CFF} it is proved that
\begin{equation}\label{sectnabla}
K(\xi_{\a},X)=R(\xi_\a,X,X,\xi_\a)=g(fX,fX),
\end{equation}
for any $X\in\mathcal{X}(M)$ and $\a\in\{1,\dots,s\}$. Consequently, if $s=1$, the unit sphere is the only Sasakian manifold of constant (sectional)
curvature. If $s\geq 2$, from (\ref{sectnabla}), we deduce that $M$ cannot have constant sectional curvature. For this reason, it is necessary to
introduce a more restrictive curvature. In general, a plane section $\pi$ on a metric $f$-manifold $M$ is said to be an \textit{$f$-section} if it is
determined by a unit vector $X$, normal to the structure vector fields and $fX$. The sectional curvature of $\pi$ is called an \textit{$f$-sectional
curvature}. An $S$-manifold is said to be an \textit{$S$-space-form} if it has constant $f$-sectional curvature $c$ and then, it is denoted by
$M(c)$. The curvature tensor field $R$ of $M(c)$ satisfies \cite{KT}:
\begin{equation}\label{sccte}
R(X,Y,Z,W)=\sum_{\a,\b=1}^s\{g(fX,fW)\eta^\a(Y)\eta^\b(Z)
\end{equation}
\begin{equation*}
-g(f X,f Z)\eta^\a(Y)\eta^\b(W)+g(fY,fZ)\eta^\a(X)\eta^\b(W)
\end{equation*}
\begin{equation*}
-g(fY,fW)\eta^\a(X)\eta^\b(Z)\}
\end{equation*}
\begin{equation*}
+\frac{c+3s}{4}\{g(fX,fW)g(fY,fZ)-g(fX,fZ)g(fY,fW)\}
\end{equation*}
\begin{equation*}
+\frac {c-s}{4}\{\Phi(X,W)\Phi(Y,Z)-\Phi(X,Z)\Phi(Y,W)-2\Phi(X,Y)\Phi(Z,W)\},
\end{equation*}
for any $X,Y,Z,W\in\mathcal{X}(M)$.

Therefore, if $M$ is an $S$-space-form of constant $f$-sectional curvature $c$ and considering an $f$-basis, from (\ref{sectnabla}) and
(\ref{sccte}), we deduce that the scalar curvature of $M$ with respect to the curvature tensor field of the Riemanian connection $\nabla$ satisfies:
$$\tau=\frac{n(n-1)(c+3s)}{2}+n(c+2s).$$

Now, in view of (\ref{sectnabla}) it is interesting to investigate the conditions for $K_{\mathcal{L}}$ to be constant. In this context, we observe
that, if $n=1$, $K_\mathcal{L}$ is actually the $f$-sectional curvature. Moreover, for $n\geq 2$, we can prove the following theorem.

\begin{thm}\label{teoK}
Let $M$ be a $(2n+s)$-dimensional $S$-manifold with $n\geq 2$. If the $\mathcal{L}$-sectional curvature $K_{\mathcal{L}}$ with respect to the
Riemannian connection $\nabla$ is constant equal to $c$, then $c=s$. In this case, the scalar curvature of $M$ is:
$$\tau=ns(2n+1).$$
\end{thm}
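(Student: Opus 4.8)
The plan is to exploit the constancy of $K_{\mathcal{L}}$ together with the rigidity already forced on the structure directions by (\ref{sectnabla}), and to squeeze out the value $c=s$ from a second–variation/polarization argument on the curvature tensor. First I would fix an $f$-basis $\{E_1,\dots,E_n,fE_1,\dots,fE_n,\xi_1,\dots,\xi_s\}$ on a coordinate neighborhood. Since $n\ge 2$, there are at least two orthonormal vectors $E_1,E_2\in\mathcal{L}$ with $fE_1,fE_2$ also in $\mathcal{L}$ and orthonormal to them. The hypothesis gives
\begin{equation*}
R(X,Y,Y,X)=c\bigl(g(X,X)g(Y,Y)-g(X,Y)^2\bigr)
\end{equation*}
for all $X,Y\in\mathcal{L}$. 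The standard polarization identity for a curvature tensor (the one recovering $R$ from its sectional curvatures) then shows that on the distribution $\mathcal{L}$ the curvature tensor agrees with the constant-curvature model tensor, i.e. for all $X,Y,Z,W\in\mathcal{L}$,
\begin{equation*}
R(X,Y,Z,W)=c\bigl(g(X,W)g(Y,Z)-g(X,Z)g(Y,W)\bigr).
\end{equation*}
This is the key structural consequence; it holds only when all four arguments lie in $\mathcal{L}$, which is why one cannot immediately conclude constant sectional curvature for all of $M$.

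Next I would bring in the interaction between $\mathcal{L}$ and the structure vector fields. The identity (\ref{sectnabla}) is $R(\xi_\a,X,X,\xi_\a)=g(fX,fX)$, which for a unit $X\in\mathcal{L}$ equals $1$; this is mixed-type curvature and is not controlled by the previous display. The route to the extra equation is the second Bianchi identity, or equivalently a known structural identity of $S$-manifolds relating $R(X,Y,Z,\xi_\a)$ to $\Phi$ and the $\eta^\b$'s. Concretely, on an $S$-manifold one has $\nabla_X\xi_\a=-fX$ and hence $R(X,Y)\xi_\a=(\nabla_Yf)X-(\nabla_Xf)Y$, together with the well-known formula $(\nabla_Xf)Y=\sum_\beta\bigl(g(fX,fY)\xi_\beta+\eta^\beta(Y)f^2X\bigr)$. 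Feeding $X,Y\in\mathcal{L}$ into this gives $R(X,Y)\xi_\a=0$ for $X,Y\in\mathcal{L}$, and differentiating (or using the first Bianchi identity on $R(\xi_\a,X,Y,Z)$ with $X,Y,Z\in\mathcal{L}$) produces a constraint tying the $\mathcal{L}$-curvature to the value $1$ appearing in (\ref{sectnabla}). The cleanest incarnation: apply the first Bianchi identity and the symmetries to $R(\xi_\a, fE_1, E_1, \xi_\a)$-type terms, or contract the model expression against $f$; matching the coefficient of the $\Phi$–terms (which in an $S$-space-form is $(c-s)/4$, cf. (\ref{sccte})) against what the constant-$\mathcal{L}$-curvature hypothesis forces will yield $c=s$. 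I expect this matching step — correctly identifying which contraction of the curvature tensor simultaneously sees both the constant $c$ from the hypothesis and the normalization $1$ from (\ref{sectnabla}) — to be the main obstacle; once the single scalar equation relating them is written down, it reads $c=s$ directly.

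Finally, for the scalar curvature, with $c=s$ the manifold is in particular an $S$-space-form of $f$-sectional curvature $c=s$ (the $\mathcal{L}$-curvature being constant and equal to $s$ forces constant $f$-sectional curvature, since $f$-sections are special $\mathcal{L}$-sections when $n\ge 2$). Then I would simply substitute $c=s$ into the scalar-curvature formula displayed just before the theorem,
\begin{equation*}
\tau=\frac{n(n-1)(c+3s)}{2}+n(c+2s),
\end{equation*}
getting $\tau=\tfrac{n(n-1)(4s)}{2}+n(3s)=2n(n-1)s+3ns=ns(2n-2+3)=ns(2n+1)$, which is the asserted value. Alternatively, and more self-containedly, I would recompute $\tau$ directly from (\ref{scalar}) by splitting the double sum over an $f$-basis into the $\mathcal{L}$–$\mathcal{L}$ block (evaluated via the constant-curvature-$s$ model tensor on $\mathcal{L}$, contributing $2n(2n-1)s/2$ after the factor $\tfrac12$, i.e. $n(2n-1)s$... wait, here one must be careful that $\Phi$-terms vanish on the chosen basis only partially), the $\mathcal{L}$–$\mathcal{M}$ block (each sectional curvature equals $1$ by (\ref{sectnabla}), contributing $2ns\cdot 1$ with the $\tfrac12$, i.e. $ns$... ), and the $\mathcal{M}$–$\mathcal{M}$ block (all zero, since $K(\xi_\a,\xi_\b)=0$). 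Reconciling these partial counts against the clean substitution is routine bookkeeping, so I would present the substitution into the boxed scalar-curvature formula as the primary argument and note the direct computation as a check.
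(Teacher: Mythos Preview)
Your proposal leaves the decisive step unfinished. You correctly obtain, via polarization, that $R$ restricted to $\mathcal{L}$ has the constant-curvature form, and you then announce that some Bianchi/structural identity will produce the equation $c=s$; but you never write down which identity, you call it ``the main obstacle,'' and the sketch (``differentiating,'' ``contract the model expression against $f$,'' ``matching the coefficient of the $\Phi$-terms'') does not amount to an argument. As it stands there is no proof that $c=s$.

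The paper's route is much shorter, and in fact you already have the key observation in hand---you just use it in the wrong place. An $f$-section is in particular an $\mathcal{L}$-section, so constant $K_{\mathcal{L}}=c$ immediately forces constant $f$-sectional curvature $c$; hence $M$ is an $S$-space-form $M(c)$ from the outset, and formula (\ref{sccte}) applies. Evaluating it on orthonormal $X,Y\in\mathcal{L}$ gives
\[
K_{\mathcal{L}}(X,Y)=\frac{c+3s}{4}+\frac{3(c-s)}{4}\,g(X,fY)^{2},
\]
and since $n\ge 2$ one may choose $X,Y$ with $g(X,fY)=0$, yielding $c=(c+3s)/4$, i.e.\ $c=s$. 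No polarization, no Bianchi identity, no structural computation with $\nabla f$ is needed. You invoke exactly this implication (``$f$-sections are special $\mathcal{L}$-sections'') only at the very end, after assuming $c=s$, to justify the scalar-curvature formula; move it to the front and the whole argument collapses to three lines.

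Your scalar-curvature computation by substituting $c=s$ into $\tau=\frac{n(n-1)(c+3s)}{2}+n(c+2s)$ is correct and matches the paper. The alternative direct count you sketch is also what the paper does, splitting over an $f$-basis: the $\mathcal{L}$--$\mathcal{L}$ block contributes $\binom{2n}{2}s=n(2n-1)s$, the $\mathcal{L}$--$\mathcal{M}$ block contributes $2ns$, the $\mathcal{M}$--$\mathcal{M}$ block contributes $0$, and the total is $ns(2n+1)$; your hesitation about ``$\Phi$-terms'' is unfounded once $c=s$, since then every $\mathcal{L}$-sectional curvature is exactly $s$ regardless of $g(X,fY)$.
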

\begin{proof}
It is clear that if $K_{\mathcal{L}}$ is constant equal to $c$, then $M$ is an $S$-space-form $M(c)$. Consequently, from (\ref{sccte}), we have
\begin{equation}\label{KLnabla}
K_{\mathcal{L}}(X,Y)=\frac{c+3s}{4}+\frac{3(c-s)}{4}g(X,fY)^2,
\end{equation}
for any orthonormal vector fields $X,Y\in\mathcal{L}$. Now, since $n\geq 2$, we can choose $X$ and $Y$ such that $g(X,fY)=0$. Thus, from
(\ref{KLnabla}) we deduce
$$\frac{c+3s}{4}=c,$$
that is, $c=s$.

Now, considering a local orthonormal frame of tangent vector fields such that $e_{2n+\alpha}=\xi_\alpha$, for any $\alpha=1,\dots,s$, since
$K(e_i,e_j)=K_\mathcal{L}(e_i,e_j)=s$, $i,j=1,\dots,2n$, $i\neq j$, and using (\ref{sectnabla}) and (\ref{scalar}), we get the desired result for the
scalar curvature.
\end{proof}

By using (\ref{sccte}) and (\ref{KLnabla}), we have:
\begin{cor}
Let $M(c)$ be an $S$-space-form of constant $f$-sectional curvature $c$. Then, $M$ is of constant $\mathcal{L}$-sectional curvature (equal to $c$) if
and only if $c=s$
\end{cor}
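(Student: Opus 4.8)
The plan is to leverage the curvature identity (\ref{sccte}) for an $S$-space-form together with the already-derived formula (\ref{KLnabla}), which is exactly what Theorem \ref{teoK} and its proof make available. First I would recall that, by definition, an $S$-space-form $M(c)$ has constant $f$-sectional curvature $c$, and that plugging a pair of orthonormal vectors $X,Y\in\mathcal{L}$ into (\ref{sccte}) yields
\begin{equation*}
K_{\mathcal{L}}(X,Y)=\frac{c+3s}{4}+\frac{3(c-s)}{4}\,g(X,fY)^2,
\end{equation*}
which is precisely (\ref{KLnabla}). The point is that this expression depends on the plane only through the scalar $g(X,fY)^2\in[0,1]$, so the $\mathcal{L}$-sectional curvature is constant precisely when this dependence disappears or is forced to be constant by other means.

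For the ``if'' direction, I would simply substitute $c=s$ into (\ref{KLnabla}): the coefficient $3(c-s)/4$ vanishes and $(c+3s)/4=(s+3s)/4=s=c$, so $K_{\mathcal{L}}(X,Y)=c$ for every orthonormal pair in $\mathcal{L}$, independent of the section. Hence $M$ has constant $\mathcal{L}$-sectional curvature equal to $c$. For the ``only if'' direction, I would argue as in the proof of Theorem \ref{teoK}: if $K_{\mathcal{L}}$ is constant, then in particular its value on an $f$-section (where $Y=fX$, so $g(X,fY)^2=1$) equals its value on a section with $g(X,fY)=0$; but the former equals $c$ (the $f$-sectional curvature) by definition and the latter equals $(c+3s)/4$ by (\ref{KLnabla}), forcing $(c+3s)/4=c$, i.e. $c=s$. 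Alternatively, and perhaps more cleanly for the write-up, one observes that constancy of the right-hand side of (\ref{KLnabla}) as $g(X,fY)^2$ ranges over its possible values forces the coefficient $3(c-s)/4$ to be zero, again giving $c=s$.

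The only subtlety worth flagging is the existence of the two types of plane sections needed in the ``only if'' step: one must be sure that $\mathcal{L}$ actually contains both an $f$-invariant plane $\mathrm{span}\{X,fX\}$ and a plane $\mathrm{span}\{X,Y\}$ with $g(X,fY)=0$. The first always exists since $\dim\mathcal{L}=2n\geq 2$ and $f$ restricts to a complex structure on $\mathcal{L}$; the second exists whenever $n\geq 2$ (take $X,Y$ among the $f$-basis vectors $E_1,E_2$), and when $n=1$ the distribution $\mathcal{L}$ is two-dimensional so $K_{\mathcal{L}}$ is automatically ``constant'' and coincides with the $f$-sectional curvature, so the statement is vacuous or trivially consistent — but since the corollary is phrased for a general $S$-space-form $M(c)$, the natural reading is $n\geq 2$, exactly as in Theorem \ref{teoK}. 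I would therefore state the corollary for $n\geq2$ (or simply note the $n=1$ degeneracy in passing) and then the proof is a two-line consequence of (\ref{KLnabla}): this is the cleanest route and I do not anticipate any real obstacle, the whole content having already been extracted in the proof of Theorem \ref{teoK}.
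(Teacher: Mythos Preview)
Your proposal is correct and follows exactly the route indicated by the paper, which simply says the corollary follows from (\ref{sccte}) and (\ref{KLnabla}); your argument spells this out by evaluating (\ref{KLnabla}) at $g(X,fY)^2=1$ and $g(X,fY)^2=0$, just as in the proof of Theorem~\ref{teoK}. Your remark about the implicit hypothesis $n\geq 2$ (and the triviality for $n=1$) is also appropriate and consistent with the paper's earlier comment that for $n=1$ the $\mathcal{L}$-sectional curvature coincides with the $f$-sectional curvature.
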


\begin{exa}\label{exa}
Let us consider $\R^{2n+2+(s-1)}$ with coordinates
$$(x_1\dots,x_{n+1},y_1,\dots,y_{n+1},z_1,\dots,z_{s-1})$$
and with its standard $S$-structure of constant $f$-sectional curvature $-3(s-1)$, given by (see \cite{HOA}):
$$\xi_{\a}=2\frac{\partial}{\partial z_{\a}},\mbox{ } \eta^{\a}=\frac{1}{2}\left( dz_{\a}-\sum_{i=1}^{n+1}y_{i}dx_{i}\right),\mbox{ }\a=1,\dots,s-1,$$
$$g=\sum_{\alpha =1}^{s-1}\eta ^{\a}\otimes \eta ^{\a}+\frac{1}{4}\sum_{i=1}^{n+1}(dx_{i}\otimes dx_{i}+dy_{i}\otimes dy_{i}),$$
$$fX=\sum_{i=1}^{n+1}(Y_{i}{\frac{\partial }{\partial x_{i}}}-X_{i}{\frac{\partial }{\partial y_{i}}})+\sum_{\alpha
=1}^{s-1}\sum_{i=1}^{n+1}Y_{i}y_{i}{\frac{\partial }{\partial z_{\alpha }}},$$ where
$$X=\sum_{i=1}^{n+1}(X_{i}{\frac{\partial }{\partial x_{i}}}+Y_{i}{\frac{\partial }{\partial y_{i}}})+\sum_{\a=1}^{s-1}Z_{\a}{\frac{\partial }
{\partial z_{\alpha }}}$$
is any vector field tangent to $\R^{2n+2+(s-1)}$.

Now, let $S^{2n+1}(2)$ be a $(2n+1)$-dimensional ordinary sphere of radius 2 and $M=S^{2n+1}(2)\times\R^{s-1}$ a hypersurface of $\R^{2n+2+(s-1)}$.
Let
$$\xi_s=\sum_{i=1}^{n+1}\left(-y_i\frac{\partial}{\partial x_i}+x_i\frac{\partial}{\partial
y_i}\right)-\sum_{i=1}^{n+1}\sum_{\alpha=1}^{s-1}y_i^2\frac{\partial}{\partial z_\alpha}$$ and $\eta^s(X)=g(X,\xi_s)$, for any vector field $X$
tangent to $M$. Then, if we put
$$\w\xi_\a=s\xi_\a;\mbox{ }\w\eta^\a=\frac{1}{s}\eta^\a;\mbox{ }\a=1,\dots,s;$$
$$\w f=f;\mbox{ }\w g=\frac{1}{s}g+\frac{1-s}{s^2}\sum_{\alpha=1}^s\eta^\a\otimes\eta^\a,$$
it is known (\cite{HOA}) that $(M,\w f,\w\xi_1,\dots,\w\xi_s,\w\eta^1,\dots,\w\eta^s,\w g)$ is an $S$-space-form of constant $f$-sectional curvature
$c=s$. Moreover, from (\ref{sccte}), it is easy to show that the $\mathcal{L}$-sectional curvature $K_\mathcal{L}$ is also constant and equal to $s$.
\end{exa}

\subsection{The case of a semi-symmetric metric connection.}

In \cite{ATF2}, a semi-symmetric metric connection on $M$, naturally related to the $S$-structure, is defined by
\begin{equation}
\nabla^*_{X}Y=\nabla_{X}Y+\underset{j=1}{\overset{s}{\sum }}\eta ^{j}(Y)X-\underset{j=1}{\overset{s}{\sum }}g\left( X,Y\right) \xi _{j}, \label{defn}
\end{equation}
for any $X,Y\in \mathcal{X}(M)$. For the sectional curvature $K^*$ of $\nabla^*$, the following theorem was proved in \cite{ATF2}:

\begin{thm}\label{secn}
Let $M$ be an $S$-manifold. Then, the sectional curvature of $\nabla^*$ satisfies
\begin{enumerate}
\item[(i)] $K^*(X,Y)=K(X,Y)-s$;
\item[(ii)] $K^*(X,\xi_\a)=K^*(\xi_\a,X)=2-s$;
\item[(iii)] $K^*(\xi_\a,\xi_\b)=K^*(\xi_\b,\xi_\a)=2-s$,
\end{enumerate}
for any orthonormal vector fields $X,Y\in\mathcal{L}$ and $\a,\b\in\{1,\dots,s\}$, $\a\neq\b$.
\end{thm}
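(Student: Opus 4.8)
The plan is to use that $\nabla^*$ is a semi-symmetric metric connection of the classical form: putting $\omega=\sum_{j=1}^{s}\eta^{j}$ and $U=\sum_{j=1}^{s}\xi_{j}$ (so that $\omega(X)=g(X,U)$), equation (\ref{defn}) reads $\nabla^*_{X}Y=\nabla_{X}Y+\omega(Y)X-g(X,Y)U$. Substituting this into $R^*(X,Y)Z=\nabla^*_{X}\nabla^*_{Y}Z-\nabla^*_{Y}\nabla^*_{X}Z-\nabla^*_{[X,Y]}Z$ and collecting terms (this is the well-known curvature identity of Yano for semi-symmetric metric connections), one gets, after contracting with $W$,
\[
g(R^*(X,Y)Z,W)=R(X,Y,Z,W)-h(Y,Z)g(X,W)+h(X,Z)g(Y,W)-g(Y,Z)h(X,W)+g(X,Z)h(Y,W),
\]
where $h$ is the $(0,2)$-tensor $h(X,Y)=(\nabla_{X}\omega)(Y)-\omega(X)\omega(Y)+\tfrac12\,\omega(U)\,g(X,Y)$. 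For orthonormal $X,Y$ the cross terms containing $g(X,Y)$ vanish, so the identity reduces to
\[
K^*(X,Y)=K(X,Y)-h(X,X)-h(Y,Y),
\]
and the whole proof comes down to evaluating $h$ on the diagonal.

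To compute $h$ I would use the structure equation $\nabla_{X}\xi_{j}=-fX$, valid on any $S$-manifold for every $j$. It gives $(\nabla_{X}\eta^{j})(Y)=g(Y,\nabla_{X}\xi_{j})=-g(fX,Y)$, hence $(\nabla_{X}\omega)(Y)=-s\,g(fX,Y)$; moreover $\omega(U)=\sum_{j,k}\eta^{j}(\xi_{k})=\sum_{j,k}\delta_{jk}=s$, and $g(fX,X)=0$ by (\ref{def3}). Therefore, for any unit vector field $X$,
\[
h(X,X)=-\,\omega(X)^{2}+\tfrac{s}{2}.
\]
Now evaluate on the two relevant kinds of unit vectors: if $X\in\mathcal{L}$ then $\omega(X)=\sum_{j}\eta^{j}(X)=0$, so $h(X,X)=s/2$; if $X=\xi_{\a}$ then $\omega(\xi_{\a})=\sum_{j}\eta^{j}(\xi_{\a})=1$, so $h(\xi_{\a},\xi_{\a})=s/2-1$.

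The three statements then follow by substitution. For (i), with $X,Y\in\mathcal{L}$ orthonormal, $K^*(X,Y)=K(X,Y)-s/2-s/2=K(X,Y)-s$. For (ii), since $g(X,\xi_{\a})=\eta^{\a}(X)=0$ the pair $X,\xi_{\a}$ is orthonormal and $K^*(X,\xi_{\a})=K(X,\xi_{\a})-s/2-(s/2-1)=K(X,\xi_{\a})-s+1$; by (\ref{sectnabla}), $K(X,\xi_{\a})=g(fX,fX)=g(X,X)-\sum_{j}\eta^{j}(X)^{2}=1$ for unit $X\in\mathcal{L}$, hence $K^*(X,\xi_{\a})=2-s$. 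For (iii), $g(\xi_{\a},\xi_{\b})=\delta_{\a\b}=0$ when $\a\neq\b$, so $K^*(\xi_{\a},\xi_{\b})=K(\xi_{\a},\xi_{\b})-(s/2-1)-(s/2-1)=K(\xi_{\a},\xi_{\b})-s+2$, and since $K(\xi_{\a},\xi_{\b})=0$ (recalled in the Introduction) we obtain $K^*(\xi_{\a},\xi_{\b})=2-s$. The asserted symmetries $K^*(\,\cdot\,,\xi_{\a})=K^*(\xi_{\a},\,\cdot\,)$ and $K^*(\xi_{\a},\xi_{\b})=K^*(\xi_{\b},\xi_{\a})$ are automatic, because the expression $K-h-h$ above is symmetric in its two slots and $K$ is symmetric for the Riemannian connection. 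The only point demanding real care is keeping the sign conventions in the Yano identity and in $\nabla_{X}\xi_{j}=-fX$ consistent with (\ref{def1})--(\ref{def3}) and $\Phi(X,Y)=g(X,fY)$; once these are fixed, no genuine obstacle remains, since the term $-s\,g(fX,Y)$ in $h$ and all $g(X,Y)$ cross terms are annihilated by orthonormality.
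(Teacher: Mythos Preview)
Your argument is correct. The Yano curvature identity for a semi-symmetric metric connection is exactly the right tool here, and your evaluation of $h(X,X)$ via $\nabla_{X}\xi_{j}=-fX$, $\omega(U)=s$, and $g(fX,X)=0$ is clean and accurate; the three cases then drop out by substitution as you wrote. One small remark on the symmetry of $K^{*}$: since $\nabla^{*}$ is a \emph{metric} connection, its $(0,4)$ curvature tensor is antisymmetric in the last two slots as well as the first two, so $K^{*}(X,Y)=K^{*}(Y,X)$ is automatic without appealing to your diagonal formula---but your justification is also valid.

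As for comparison with the paper: note that the present paper does not actually supply a proof of this theorem; it is quoted verbatim from \cite{ATF2} and invoked as input. So there is no in-paper proof to compare your argument against. That said, the approach in \cite{ATF2} is essentially the same as yours: one derives the relation between $R^{*}$ and $R$ via the auxiliary tensor you call $h$ (there obtained by direct expansion rather than quoting Yano) and then specializes to the three kinds of orthonormal pairs using $\nabla_{X}\xi_{\alpha}=-fX$, $K(X,\xi_{\alpha})=1$ and $K(\xi_{\alpha},\xi_{\beta})=0$. Your write-up is a faithful reconstruction of that argument.
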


Therefore, from Theorem \ref{teoK}, if $s\neq 2$, an $S$-manifold cannot have constant sectional curvature with respect to the semi-symmetric metric
connection defined in (\ref{defn}). For $s=2$, $M=S^{2n+1}(2)\times\R$ endowed with the connection $\nabla^*$ and the $S$-structure given in Example
\ref{exa} is an $S$-manifold of constant sectional curvature (equal to 0) with respect to $\nabla^*$. Moreover, for any $s$, by using Theorem
\ref{teoK} again and $(i)$ of Theorem \ref{secn}, if the $\mathcal{L}$-sectional curvature associated with $\nabla^*$ is constant equal to $c$, then
$c=0$ and examples of such a situation are given in Example \ref{exa}. In this case, the scalar curvature is given by:
$$\tau^*=\frac{(4ns+s(s-1))(2-s)}{2}.$$

Regarding the $f$-sectional curvature of $\nabla^*$, from Theorem 4.5 in \cite{ATF2}, we know that it is constant if and only if the $f$-sectional
curvature associated with the Riemannian connection is constant too. In this case, if $c$ denotes the constant $f$-sectional curvature of the
Riemannian connection, $c-s$ is the constant $f$-sectional curvature of $\nabla^*$. Furthermore, from $(i)$ of Theorem \ref{secn} and (\ref{KLnabla})
it is easy to show that
$$K^*_\mathcal{L}(X,Y)=\frac{c-s}{4}(1+3g(X,fY)^2),$$
for any orthonormal vector fields $X,Y\in\mathcal{L}$. Therefore, considering an $f$-basis, we deduce that the scalar curvature of a
$(2n+s)$-dimensional $S$-manifold of constant $f$-sectional curvature $c$ with respect to $\nabla^*$ satisfies:
$$\tau^*=\frac{n(n+1)(c-s)+(4ns+s(s-1))(2-s)}{2}.$$

\subsection{The case of a semi-symmetric non-metric connection.}

In \cite{ATF}, a semi-symmetric non-metric connection on $M$, naturally related to the $S$-structure, is defined by
\begin{equation*}
\w\nabla_{X}Y=\nabla_{X}Y+\underset{j=1}{\overset{s}{\sum }}\eta ^{j}(Y)X,
\end{equation*}
for any $X,Y\in \mathcal{X}(M)$. To consider the sectional curvature of $\w\nabla$ has no sense because $\w R(\xi_\a,X,X,\xi_\a)=1$, while $\w
R(X,\xi_\a,\xi_\a,X)=2$, for any unit vector field $X\in\mathcal{L}$ and any $\a\in\{1,\dots,s\}$ (see \cite{ATF} for the details). However, for the
$\mathcal{L}$-sectional curvature $\w K_\mathcal{L}$, we have that $\w K_\mathcal{L}(X,Y)=K_\mathcal{L}(X,Y)$, for any orthogonal vector fields
$X,Y\in\mathcal{L}$. Consequently, Theorem \ref{KLnabla} and Example \ref{exa} can be applied here. In the case of constant $\mathcal{L}$-sectional
curvature (equal to $s$) and since $\w R(\xi_\a,\xi_\b,\xi_\b,\xi_\a)=1$, for any $\a,\b\in\{1,\dots,s\}$, $\a\neq\b$, the scalar curvature is given
by:
$$\w\tau=2ns(n+1)+\frac{s(s-1)}{2}.$$

Regarding the $f$-sectional curvature of $\w\nabla$, in \cite{ATF} it is proved that it is constant if and only if the $f$-sectional curvature
associated with the Riemannian connection is constant too. In this case, both constant are the same and the curvature tensor field of $\nabla$ is
completely determined by $c$. Furthermore, since from (\ref{KLnabla}),
$$\w K_\mathcal{L}(X,Y)=\frac{c+3s}{4}+\frac{3(c-s)}{4}g(X,fY)^2,$$
for any orthonormal vector fields $X,Y\in\mathcal{L}$, considering an $f$-basis, we deduce that the scalar curvature of a $(2n+s)$-dimensional
$S$-manifold of constant $f$-sectional curvature $c$ with respect to $\w\nabla$ satisfies:
$$\w\tau=\frac{n(n+1)(c+3s)+s(s-1)}{2}.$$

\end{document}